\newtheorem{theorem}{Theorem}
\newtheorem*{theorem*}{Theorem}
\newtheorem{proposition}{Proposition}
\newtheorem{lemma}{Lemma}
\theoremstyle{definition}
\newtheorem{remark}{Remark}
\newtheorem{example}{Example}
\newcommand{\abs}[1]{\left\lvert#1\right\rvert}
\newcommand{\norm}[1]{\lvert\lvert#1\rvert\rvert}
\newcommand{\CP}{\mathcal{P}}
\newcommand{\R}{\mathbb{R}}
\newcommand{\B}{\mathcal{B}}
\newcommand{\disc}{\mathbb{D}}
\newcommand{\C}{\mathbb{C}}
\newcommand{\n}{\mathbb{N}}
\newcommand{\dD}{\mathcal{D}}
\newcommand{\zb}{\overline{z}}
\newcommand{\D}{\Omega}
\newcommand{\Dc}{\overline{\Omega}}
\newcommand{\dbar}{\overline{\partial}}
\title[Analysis on the Intersection of Pseudoconvex Domains]{Analysis on the Intersection of Pseudoconvex Domains}
\author{Mehmet \c{C}el\.ik}
\address[Mehmet \c{C}elik]{Texas A\&M University - Commerce, Department of Mathematics, 1600 Education Dr., Binnion Hall Room 303A, Commerce, TX 75429-3011, U.S.A.}
\email{mehmet.celik@tamuc.edu}
\author{Yunus E. Zeytuncu}
\address[Yunus E. Zeytuncu]{University of Michigan - Dearborn, Department of Mathematics and Statistics, Dearborn, MI  48128}
\email{zeytuncu@umich.edu}
\subjclass[2010]{Primary 32W05; Secondary 46B35}
\keywords{$\dbar$-Neumann operator, Bergman projection operator, Hankel operator, Hilbert-Schmidt operator}
\thanks{The work of the second author was partially supported by a grant from the Simons Foundation (\#353525), and also by a University of Michigan-Dearborn CASL Faculty Summer Research Grant. The work of the first author was partially supported by a Texas A$\&$M University - Commerce International Faculty Development Grant}
\begin{document}
\begin{abstract}
In this note, we discuss the preservation of certain analytic properties of the $\dbar$-Neumann operator, Bergman projection and Hankel operators on the intersection of pseudoconvex domains. 
\end{abstract}

\maketitle
\section{Introduction}
It has been well known that any obstruction for compactness of the $\dbar$-Neumann problem should live in the boundary of the domain of definition \cite[Section 4.8]{StraubeBook}. A test question to make this more precise is the following. Suppose we have two pseudoconvex domains where the respective $\dbar$-Neumann operators on both domains are compact. If the obstruction lives in the boundary, then the obstruction should be absent on the boundary of each domain and also the boundary of the intersection domain. Therefore, can we conclude that the $\dbar$-Neumann operator on the intersection domain is also compact? 

One challenge with this problem is that the intersection domain does not have smooth boundary. It is not known whether smooth forms are dense in  $\mbox{Dom}(\overline{\partial})\cap\mbox{Dom}(\overline{\partial}^{\ast})$ under the graph norm on a pseudoconvex domain with Lipschitz boundary. However, even getting a compactness estimate for smooth forms is not immediate, see \cite{StraubeAyyuru} for a recent partial answer. 

Inspired by this problem, similar questions about preservation of other analytic properties of different operators can be investigated on the intersection of two pseudoconvex domains. In this paper, we present some results related to this general investigation. In particular, we discuss the following directions.
\begin{itemize}
\item Compactness of the $\dbar$-Neumann operator on intersection domains.
\item Sobolev and $L^p$ regularity of the Bergman projection on intersection domains.
\item Hilbert-Schmidt properties of Hankel operators on intersection domains.
\end{itemize}

In the second section, we present some observations on the compactness problem on intersection domains. In the third section, we show by an elementary example that neither Sobolev nor $L^p$ regularity of the Bergman projection is necessarily preserved on the intersection of two domains. In the last section, we present two domains that both admit Hilbert-Schmidt Hankel operators but the intersection domain does not.


\section{Compactness on the intersection of two domains}
Let $\D$ be a bounded pseudoconvex domain in $\C^n$ with smooth boundary. \textit{A Compactness estimate} for the $\overline{\partial}$-Neumann operator is said to hold on $\Omega$ if for a given  $\varepsilon>0$ there is a constant $C_{\varepsilon}>0$ such that \begin{eqnarray*}\label{eq1}
\norm{u}^{2}\leq \varepsilon\left(\norm{\overline{\partial}u}^{2}+\norm{\overline{\partial}^{\ast}u}^{2}\right)+C_{\varepsilon}\norm{u}_{-1}^{2}
\end{eqnarray*}
is valid $\forall u\in \mbox{Dom}(\overline{\partial})\cap\mbox{Dom}(\overline{\partial}^{*})\subset L_{(0,q)}^{2}(\Omega)$. ($\norm{\cdot}_{-1}$ is the $L^{2}$-Sobolev ($-1$)-norm.)
Let $\Omega_{1}:=\left\{z\in \C^{n}\ |\ \rho_{1}(z)<0\right\}$ and $\Omega_{2}:=\left\{z\in \C^{n}\ |\ \rho_{2}(z)<0\right\}$ be two bounded pseudoconvex domains in $\C^{n}$ with smooth boundaries, and $\nabla\rho_{1}$ and $\nabla\rho_{2}$ be nonzero on $b\Omega_{1}$ and $b\Omega_{2}$ respectively. Assume that the compactness estimates for the $\overline{\partial}$-Neumann operator exist on both domains, $\Omega_{1}$ and $\Omega_{2}$. We investigate if there is a compactness estimate for the $\overline{\partial}$-Neumann operator on the intersection of $\Omega_{1}$ and $\Omega_{2}$. The local property of the compactness of the $\overline{\partial}$-Neumann operator (see \cite{FuStraube2001}) implies that local compactness estimates hold away from the set $S:=\lbrace z\in\C^{n}\ |\ \rho_{1}(z)=0=\rho_{2}(z)\rbrace.$ In the following two subsections, we present partial answers under additional assumptions. 

\subsection{Transversal Intersection}\label{SpecialDomains}

First, we assume that two domains $\Omega_1$ and $\Omega_2$ intersect transversally. That is, $S$ is a smooth manifold. We denote this by $\Omega_1\pitchfork \Omega_2$.

\begin{remark} 
Following the proof of locality of compactness estimate in \cite[Proposition 4.4]{StraubeBook}, one can see that the $\overline{\partial}$-Neumann operator is compact on $\Omega_{1}\pitchfork\Omega_{2}$ if one of the domains additionally satisfies property ($P$). In particular, if one of the domains is locally convexifiable domain or Hartogs in $\mathbb{C}^2$ then a compactness estimate holds on the intersection: property (P) is known to actually be equivalent to compactness on such domains \cite{FuStraube2001,FuChrist05}. The same conclusion holds if we consider property $(\tilde{P})$ instead of property $(P)$, see  \cite[Theorem 4.1.2]{AyyuruPhD2014} .
\end{remark}

Next, we focus on $\mathbb{C}^2$. In this case, the set $S$ is a two real dimensional smooth submanifold. If a point $p\in S$ has a non-trivial complex tangent space $H_{p}(S)$ we call it an exceptional point of $S$. We recall the following result about totally real manifolds in $\mathbb{C}^2$.

\begin{lemma}\cite[Lemma $17.2$]{HerbertWermer98}\label{lemma1}
Let $S$ be a totally real smooth submanifold of an open set in $\C^{2}$. Let $d_{S}(x):=dist(x,S)=\inf\lbrace \vert x-y\vert\ \vert\ y\in S \rbrace$. Then, there is a neighborhood $U_{_{S}}$ of $S$ such that $d_{S}^{2}(x)$ is smooth and strictly plurisubharmonic in $U_{_{S}}$.
\end{lemma}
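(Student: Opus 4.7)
The plan is to prove the two assertions separately: smoothness of $d_S^2$ will follow from the tubular neighborhood theorem, while strict plurisubharmonicity will be reduced to a direct Hessian computation at points of $S$ itself and then extended to a neighborhood by continuity.

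First, since $S$ is a smooth submanifold, each $p \in S$ admits an open neighborhood in which the nearest-point projection $\pi$ onto $S$ is well defined and smooth; taking the union produces an open neighborhood $U_S$ of $S$ on which $d_S^2(x) = |x - \pi(x)|^2$ is smooth. Next, at $p \in S$ I would compute the real Hessian of $d_S^2$. Choosing real orthonormal coordinates adapted to the orthogonal splitting $T_p\C^2 = T_pS \oplus N_pS$ and writing $S$ locally as a graph over $T_pS$ with vanishing first-order terms gives the Taylor expansion
\[
d_S^2(x) = |(x-p)^{\perp}|^2 + O(|x-p|^3),
\]
where $(x-p)^\perp$ denotes the component of $x-p$ in $N_pS$. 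Hence the real Hessian of $d_S^2$ at $p$ equals $2 P_{N_pS}$, twice the orthogonal projection onto the normal space. Invoking the standard identity
\[
L_{d_S^2}(p)(v) = \tfrac14 \bigl[\text{Hess}(d_S^2)(p)(v,v) + \text{Hess}(d_S^2)(p)(Jv,Jv)\bigr]
\]
between the Levi form $L_{d_S^2}$ and the real Hessian (with $J$ denoting multiplication by $i$) yields
\[
L_{d_S^2}(p)(v) = \tfrac12\bigl(|P_{N_pS} v|^2 + |P_{N_pS} Jv|^2\bigr).
\]
If this vanishes for a nonzero $v \in \C^2$, then both $v$ and $Jv$ lie in $T_pS$, forcing $v \in T_pS \cap JT_pS$; total realness of $S$ means this intersection is $\{0\}$, a contradiction. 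So the Levi form is strictly positive definite at every point of $S$.

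Finally, since $d_S^2$ is smooth on $U_S$, its Levi form varies continuously and, being strictly positive definite on $S$, remains so on an open neighborhood of $S$; shrinking $U_S$ to that neighborhood finishes the proof. The main obstacle is the bookkeeping in the Hessian computation, namely verifying that the graph-coordinate Taylor expansion really produces the Hessian $2P_{N_pS}$ with no hidden cross terms, and then correctly switching from the real Hessian to the Levi form via the $J$-identity above. Total realness enters only at one place, but it is the crux of the positivity argument; neither smoothness of $d_S^2$ nor the Hessian formula uses it.
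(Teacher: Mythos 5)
The paper does not actually prove this lemma; it is quoted verbatim from Alexander and Wermer \cite[Lemma 17.2]{HerbertWermer98}, so there is no internal proof to compare against. Your argument is correct and is essentially the standard proof of this classical fact: smoothness of $d_S^2(x)=|x-\pi(x)|^2$ from the tubular-neighborhood projection (using that $S$ is embedded, so for $x$ near $p$ the global infimum is attained at the local nearest point), the second-order expansion giving real Hessian $2P_{N_pS}$ at $p\in S$, the averaging identity $L_{d_S^2}(p)(v)=\tfrac14\bigl[\mathrm{Hess}(v,v)+\mathrm{Hess}(Jv,Jv)\bigr]$ to pass to the Levi form, total reality $T_pS\cap JT_pS=\{0\}$ forcing strict positivity on $S$, and openness of positive-definiteness to obtain it on a neighborhood.
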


\begin{example}\label{example7} Let $K$ be the set of all exceptional points of $S$. Note that $K$ is a compact subset of $S$. Suppose that $K\neq\emptyset$ and $S\backslash K\not=\emptyset$. Also suppose that $K$ has property $(P)$. Then we get a compactness estimate on the intersection. Indeed, $S\backslash K$ is a smooth manifold with real dimension $2$. By Lemma \ref{lemma1} we can say that on a neighborhood $U_{L}$ of any compact $L\subset S\backslash K$, there is a smooth strictly plurisubharmonic function,  $d_{L}^{2}(x)$. Then it is easy to see that $L$ satisfies property $(P)$. Namely, we have $\left(\frac{\partial^{2} d_{L}^{2}}{\partial z_{j}\partial \overline{z}_{k}}(z)\right)_{j,k}\geq C>0$ for $z\in U_{L}$. Thus, for a given $M>0$ set $\lambda_{M}(z):=\frac{2M}{C}d_{L}^{2}(z)$ on $z\in U_{L}$, then  $\left(\frac{\partial^{2}\lambda_{M}}{\partial z_{j}\partial \overline{z}_{k}}(z)\right)_{j,k}\geq M$ on $U_{L}$. There is a neighborhood $U_{_{M}}(\subset U_{L})$ of $L$ with $0\leq \lambda_{M}(z)\leq 1$. It is possible to write $S\backslash K$ as a union of countably many compact subsets $\left\{L_{j}\right\}_{j=1}^{\infty}$, where each $\overline{L}_{j}=L_{j}\subset\subset S\backslash K$ and has property $(P)$. We write $S\left(=\overline{S}\subset\subset\C^{2}\right)$ as $\left(\bigcup_{j=1}^{\infty} L_{j}\right)\cup K$. Since, each of these compact subsets has property $(P)$ then, by \cite[Proposition 1.9]{Sibony87} $S$ has property $(P)$ and hence a compactness estimate holds on the intersection. 

We know a few instances where $K$ will have the desired property. In particular, if 
\begin{itemize}
\item[$(a)$] $K$ is a discrete set,
\item[$(b)$] $K$ is a smooth curve,
\item[$(c)$] $K$ has $2$-dimensional Hausdorff measure zero,
\end{itemize}
then $K$ has property ($P$). Indeed, $(c)$ implies $(a)$ and $(b)$; for $(c)$ we refer to \cite{Sibony87} and \cite{Boas88}. 
\end{example}

For $p\in S$, let $T_{p}(S)$ denote the real tangent space at the point $p\in S$. We have $\mbox{dim}_{_{\R}}(T_{p}(b\Omega_{j}))=3$ for $j=1,\ 2$ and  $\mbox{dim}_{_{\R}}(H_{p}(b\Omega_{j}))=2$ for $j=1,\ 2$. Then 
$$\mbox{dim}_{_{\R}}(T_{p}(S))=\mbox{dim}_{_{\R}}\left(T_{p}(b\Omega_{1})\pitchfork T_{p}(b\Omega_{2})\right)=2.$$ 
Thus, if complex tangents exist at a point $p$ on $S$ then $T_{p}(S)=H_{p}(S)$. In other words, if the complex normals are linearly dependent (over $\C$), then and only then, we have a complex tangent to $S$ at $p$. Therefore, we conclude the following statement.
\begin{lemma}\label{totally real}
$p\in S$ is not an exceptional point if and only if 
\begin{eqnarray*}
\partial\rho_{1}(p)\wedge\partial\rho_{2}(p)\neq 0.
\end{eqnarray*}
$($That is, $S$ is totally real at $p$ if and only if $\mbox{det}\left(\frac{\partial\rho_{j}}{\partial z_{k}}(p)\right)_{1\leq j,k\leq 2}\neq 0$.$)$
\end{lemma}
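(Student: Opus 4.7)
The plan is to translate the geometric dichotomy between ``complex tangents to $S$ exist at $p$'' and ``$S$ is totally real at $p$'' into a linear-algebra statement on the holomorphic cotangent space; the authors have in fact sketched this in prose just before the lemma, so what follows is only a formalization.

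First, because $\D_1\pitchfork \D_2$, the real tangent space $T_p(S)=T_p(b\D_1)\cap T_p(b\D_2)$ has real dimension $2$. The subspace $H_p(S):=T_p(S)\cap JT_p(S)$ is $J$-invariant, hence has even real dimension, and it is contained in $T_p(S)$. Consequently $H_p(S)$ is either $\{0\}$ or all of $T_p(S)$, and $p$ is non-exceptional exactly when $H_p(S)=\{0\}$. A direct unpacking of definitions, using $T_p(S)\subset T_p(b\D_j)$, shows that $H_p(S)=H_p(b\D_1)\cap H_p(b\D_2)$: if $X\in H_p(S)$ then $X,JX\in T_p(b\D_j)$ for $j=1,2$, so $X\in H_p(b\D_j)$ for both $j$; conversely, the intersection $H_p(b\D_1)\cap H_p(b\D_2)$ is $J$-invariant and sits inside $T_p(S)$.

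Finally, under the standard identification $T_p(\C^2)\to T_p^{1,0}(\C^2)\cong\C^2$ given by $X\mapsto \tfrac12(X-iJX)$, the complex tangent $H_p(b\D_j)$ corresponds to $\ker \partial\rho_j(p)\subset \C^2$, a complex hyperplane (the point being that for real $X$ the condition $d\rho_j(X)=d\rho_j(JX)=0$ is equivalent to $\partial\rho_j(X)=0$). Therefore $H_p(S)=\{0\}$ iff $\ker\partial\rho_1(p)\cap \ker\partial\rho_2(p)=\{0\}$ in $\C^2$, iff the covectors $\partial\rho_1(p),\partial\rho_2(p)\in (T_p^{1,0})^{*}$ are $\C$-linearly independent, iff $\partial\rho_1(p)\wedge\partial\rho_2(p)\neq 0$. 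Expanding $\partial\rho_j=\sum_k \frac{\partial\rho_j}{\partial z_k}\,dz_k$ exhibits this last condition as $\det\left(\frac{\partial\rho_j}{\partial z_k}(p)\right)_{1\le j,k\le 2}\neq 0$, giving the parenthetical form of the lemma.

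The only thing requiring care is the bookkeeping between real tangent vectors and their $(1,0)$-counterparts when identifying $H_p(b\D_j)$ with $\ker\partial\rho_j(p)$; beyond that the argument is purely linear-algebraic, and I do not anticipate any substantive obstacle.
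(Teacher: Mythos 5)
Your proof is correct and follows essentially the same route as the paper, whose argument is the dimension count in the prose immediately preceding the lemma: transversality forces $\dim_{\R}T_{p}(S)=2$, so the $J$-invariant subspace $H_{p}(S)$ is either trivial or all of $T_{p}(S)$, and the latter happens precisely when the complex normals $\partial\rho_{1}(p),\partial\rho_{2}(p)$ are $\C$-linearly dependent. Your write-up merely formalizes the identification $H_{p}(S)=H_{p}(b\D_{1})\cap H_{p}(b\D_{2})=\ker\partial\rho_{1}(p)\cap\ker\partial\rho_{2}(p)$ that the paper leaves implicit.
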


\begin{remark}
Assume the set of exceptional points $K$ has an inner point (relative to the set $S$), that is,  $\mbox{Interior}(K)=:K^{\circ}\not=\emptyset$. Now, $K^{\circ}$ as a subset in $\C^{2}$ is a real smooth submanifold of $S$ all of whose tangents are complex tangents. Such a submanifold is a Riemann surface see \cite{BaouendiEbenfeltRothschild99}.
Thus, we would have an analytic disc on the boundaries of $\Omega_{1}$ and $\Omega_{2}$. In $\mathbb{C}^2$, existence of an analytic disc in the boundary contradicts the compactness of the $\dbar$-Neumann operators on $\D_1$ and $\D_2$ in the assumption. Therefore the set of exceptional points should have empty interior according to the relative topology on $S$.
\end{remark}

\begin{remark}
A resent result of Ayy\"ur\"u and Straube \cite{StraubeAyyuru} says if we have two smooth bounded pseudoconvex domains $\D_1$ and $\D_2$ (in $\C^n$) intersecting transversaly (so that the intersection is connected) and the $\dbar$-Neumann operators on $(0,n-1)$-forms both on $\D_1$ and on $\D_2$ are compact then so is the $\dbar$-Neumann operator on $(0,n-1)$-forms on the intersection. Note that when $n=2$, this result is sufficient to answer the main question in affirmative. That is, if $\D_1\pitchfork\D_2\subset\C^2$ and $N_{1}^{\D_1}$, $N_{1}^{\D_2}$ are compact then $N_{1}^{\D_1\pitchfork\D_2}$ is compact. \end{remark}

\medskip

\subsection{Non-transversal Intersection}
For $u\in$Dom$(\dbar^*)$ on $\D_1\cap\D_2$, rewriting the form $u$ as a sum of two forms $u_1$ and $u_2$ where $u_1\in$Dom$(\dbar^*)$ on $\D_1$ and $u_2\in$Dom$(\dbar^*)$ on $\D_2$ is a crucial decomposition in solving the intersection problem. If one can accomplish this kind of a decomposition, then a compactness estimate can be deduced on the intersection domain. However, a naive decomposition by smooth cutoff functions does not preserve Dom$(\dbar^*)$ on transversal intersections. On the other hand, in some cases of non-transversal intersection we can accomplish this decomposition. In this section, we present two instances where this happens. Here, we assume that two domains intersect non-transversally, but with smooth separation of boundaries. 

First, we consider the following special case. Let $\Omega_{1}$ and $\Omega_{2}$ intersect each other such that the boundary of $S$, $bS=S_1\cup S_2$, is the union of two disjoint boundary components, $S_{1}$ and $S_{2}$ such that $S_{1}\cap S_{2}=\emptyset$. Assume that the boundary of the resultant domain, $b(\Omega_{1}\cap\Omega_{2})$, is a piecewise smooth boundary and the non-smooth parts of $\Omega_{1}\cap\Omega_{2}$ are $S_{1}$ and $S_{2}$, as in the Figure \ref{Fig2}. In this setting, we prove the following proposition.

\begin{figure}
\centering
\includegraphics[width=130mm]{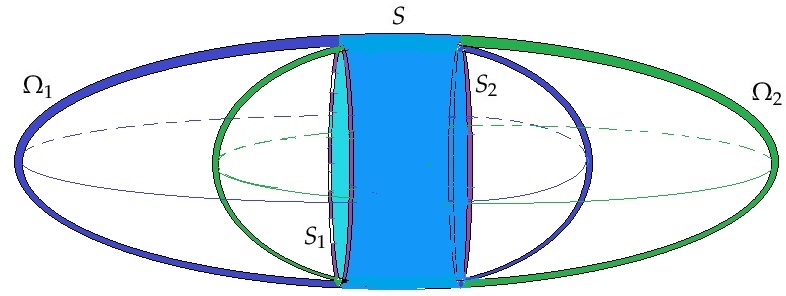}
\caption{$\Omega_{1}$ and $\Omega_{2}$ intersect each other in a way that the boundary of $S:=\lbrace\rho_1(z)=\rho_{2}(z)=0\rbrace\subset b(\D_1\cap\D_2)$ is union of two disjoint boundary components, $S_{1}$ and $S_{2}$.}
\label{Fig2}
\end{figure}



\begin{proposition}\label{nontransversal intersection}
If there exists a compactness estimate for the $\overline{\partial}$-Neumann operator on $\Omega_{1}$ and on $\Omega_{2}$, then there exists a compactness estimate for the $\overline{\partial}$-Neumann operator on $\Omega_{1}\cap\Omega_{2}$.
\end{proposition}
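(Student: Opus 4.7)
The plan is to use the decomposition strategy signaled by the authors: split any $u \in \mbox{Dom}(\dbar) \cap \mbox{Dom}(\dbar^*)$ on $\Omega_1 \cap \Omega_2$ as $u = u_1 + u_2$, with $u_j$ living in $\mbox{Dom}(\dbar) \cap \mbox{Dom}(\dbar^*)$ on $\Omega_j$, and then assemble the compactness estimates on $\Omega_1$ and $\Omega_2$ into one on the intersection. Concretely, I would write $u = \chi u + (1-\chi)u$ for a carefully chosen smooth cutoff $\chi$ on $\overline{\Omega_1 \cap \Omega_2}$, and extend $\chi u$ (resp.\ $(1-\chi)u$) by zero across $b\Omega_2 \cap \Omega_1$ (resp.\ $b\Omega_1 \cap \Omega_2$) to obtain $u_1$ on $\Omega_1$ and $u_2$ on $\Omega_2$.

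The construction of $\chi$ is the heart of the matter. I would arrange $\chi \equiv 1$ in a relative neighborhood of $b\Omega_1 \cap \overline{\Omega_2}$ inside $\overline{\Omega_1 \cap \Omega_2}$ and $\chi \equiv 0$ in a relative neighborhood of $b\Omega_2 \cap \overline{\Omega_1}$. These two boundary pieces meet exactly along $S_1 \cup S_2$, so the compatibility of such a cutoff with the extensions depends crucially on the non-transversality of the meeting: at $p \in S_j$, the complex normals $\partial\rho_1(p)$ and $\partial\rho_2(p)$ are parallel, so the two $\mbox{Dom}(\dbar^*)$ boundary conditions $u \rfloor \partial\rho_1 = 0$ and $u \rfloor \partial\rho_2 = 0$ coincide along $S_j$. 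This collapse of the two conditions into one is what permits the transition region of $\chi$ across $S_j$ to be consistent with membership in $\mbox{Dom}(\dbar^*)$ on \emph{both} $\Omega_1$ and $\Omega_2$. The disjointness of $S_1$ and $S_2$ allows the transition zones to be placed in small, disjoint neighborhoods. One then verifies that the extended $u_j$ lies in $L^2_{(0,q)}(\Omega_j)$, that its distributional $\dbar$ is $L^2$ with no singular contribution across the interface (because $\chi u$ or $(1-\chi)u$ vanishes near the respective interface), and that the free boundary condition on $b\Omega_j$ is inherited from $u$.

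With the splitting in hand, I would apply the hypothesized compactness estimates on $\Omega_1$ and $\Omega_2$ to get
$$\norm{u_j}^2 \leq \varepsilon\bigl(\norm{\dbar u_j}^2 + \norm{\dbar^* u_j}^2\bigr) + C_\varepsilon \norm{u_j}_{-1}^2, \qquad j=1,2.$$
Writing $\chi_1 = \chi$, $\chi_2 = 1-\chi$, the product rule gives $\dbar u_j = \chi_j\, \dbar u + \dbar\chi_j \wedge u$ and $\dbar^* u_j = \chi_j\, \dbar^* u - \dbar\chi_j \rfloor u$, so the commutator terms involving $d\chi_j$ are zeroth order in $u$ and bounded in $L^2$ by $C\norm{u}$. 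Summing over $j$, using $\norm{u}^2 \leq 2(\norm{u_1}^2 + \norm{u_2}^2)$, absorbing the resulting $C\varepsilon \norm{u}^2$ contribution on the left for $\varepsilon$ small, and bounding $\norm{u_j}_{-1}$ by $C\norm{u}_{-1}$ via boundedness of multiplication by a smooth cutoff on $H^{-1}$, one arrives at the desired compactness estimate for $u$ on $\Omega_1 \cap \Omega_2$.

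The main obstacle, as foreshadowed in the paragraph before the proposition, is the construction and verification of $\chi$ near $S_1 \cup S_2$: a naive partition of unity on a transversal intersection would force the contradictory requirement $\chi = 0 = 1$ on $S$, so the whole argument rests on the tangential meeting collapsing the two boundary conditions into one, together with the disjointness of $S_1$ and $S_2$ allowing the two "corner" zones to be treated in separated, disjoint neighborhoods. Carefully justifying that the zero-extended $u_j$ genuinely lies in $\mbox{Dom}(\dbar^*)$ on $\Omega_j$ right through the tangential seam is the most delicate point of the proof.
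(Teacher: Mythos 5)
Your overall strategy is the paper's: a cutoff decomposition $u=\chi u+(1-\chi)u$, zero extension of each piece into $\Omega_1$ and $\Omega_2$ respectively, the hypothesized compactness estimates on each domain, and zeroth-order control of the commutator and $(-1)$-norm terms (the paper bounds $\norm{(\nabla\chi)u}^2$ via the basic estimate on the pseudoconvex intersection rather than absorbing $C\varepsilon\norm{u}^2$ into the left side, but both work). The genuine problem is the specification of $\chi$, which as written cannot exist. In the configuration of Figure \ref{Fig2} the set $S=\{\rho_1=\rho_2=0\}$ is a \emph{codimension-one} face shared by both boundaries, so $b\Omega_1\cap\overline{\Omega_2}$ and $b\Omega_2\cap\overline{\Omega_1}$ each contain all of $S$; they do not ``meet exactly along $S_1\cup S_2$.'' Demanding $\chi\equiv 1$ near the first set and $\chi\equiv 0$ near the second is therefore exactly the contradiction you diagnose for the transversal case, now spread over a three-dimensional face rather than a corner, and no appeal to parallel normals can produce such a function.

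The repair --- and it is what the paper does --- is to require constancy of the cutoff only near $K_1=S_1\cup\left(b\Omega_2\cap\Omega_1\right)$ and $K_2=S_2\cup\left(b\Omega_1\cap\Omega_2\right)$, which are disjoint closed subsets of $\overline{\Omega_1\cap\Omega_2}$ precisely because $S_1\cap S_2=\emptyset$ and the boundaries separate smoothly; over the interior of the face $S$ the cutoff is allowed to vary. Membership of the extended pieces in $\mbox{Dom}(\dbar^{\ast})$ is then checked facewise: across $b\Omega_2\cap\Omega_1$ (resp.\ $b\Omega_1\cap\Omega_2$) the relevant piece vanishes identically, so the zero extension is harmless; on all of $S$ the normal components of both pieces vanish because the normal component of $u$ itself vanishes there and $S$ is simultaneously a piece of $b\Omega_1$, of $b\Omega_2$, and of $b(\Omega_1\cap\Omega_2)$. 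That is the correct location of the ``collapse of the two boundary conditions into one'' you invoke: it operates on the whole face $S$, not just at the edges $S_j$, and it is what permits the transition region of $\chi$ to sit over $S$; the corners $S_1$ and $S_2$ are instead handled by making the respective piece vanish identically near them. With $K_1,K_2$ in place of your two sets, the remainder of your argument goes through essentially as you wrote it.
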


\begin{proof}
$S:=\lbrace\rho_1(z)=\rho_{2}(z)=0\rbrace\subset b(\D_1\cap\D_2)$ is a part of the boundary with co-dimension 1 and boundary of $S$ is $S_1\cup S_2$ with codimension $2$.

Let $K_{1}:=S_1\cup\left(\lbrace \rho_2(z)=0\rbrace\cap\Omega_{1}\right)$ and $K_{2}:=S_2\cup\left(\lbrace \rho_1(z)=0\rbrace\cap\Omega_{2}\right)$. The sets $K_{1}$ and $K_{2}$ are relatively disjointly closed in $\overline{\D_1\cap\D_2}$. Then, we can find a smooth function $\phi(z)\in C^{\infty}(\C^n)$ such that $0\leq \phi\leq 1$ with $\phi\equiv 1$ on a neighborhood of $K_{1}$ and $\phi\equiv 0$ on a neighborhood of $K_{2}$.

For $u\in \mbox{Dom}(\overline{\partial})\cap\mbox{Dom}(\overline{\partial}^{\ast})\subset L_{(0,1)}^{2}(\Omega_{1}\cap \Omega_{2})$ write $u=\phi u-(\phi-1)u$, and let $v_{1}:=(\phi-1)u$ and $v_{2}:=\phi u$.

Since $u\in \mbox{Dom}(\overline{\partial}^{\ast})\subset L_{(0,1)}^{2}(\Omega_{1}\cap \Omega_{2})$,  (due to the density lemma \cite[Lemma 4.3.2]{ChenShawBook} we can work with continuous up to the boundary forms $u$) the normal component (denoted by $u_n$) is zero on $b(\D_1\cap \D_2)$. 

Since $v_1\equiv 0$ on $K_1$, we extend $v_1$ as a zero form to the part $\Dc_1\backslash\Dc_2$ of the domain $\D_1$. Thus, $v_1\equiv 0$ on $\left(b\D_1\backslash \Dc_2\right)\cup S_1\subset b\D_1$ and the normal component of $v_1$ is $(\phi-1)u_n=(\phi-1)\cdot 0=0$ on $S\cup K_2\subset b\D_1$; note that $\left\{\left(b\D_1\backslash \Dc_2\right) \cup S_1\right\} \cup \left\{S\cup K_2\right\}=b\D_1$. Thus, $v_1\in\mbox{Dom}(\overline{\partial}^{\ast})\subset L_{(0,1)}^{2}(\Omega_{1})$. 

Since $u\in\mbox{Dom}(\overline{\partial})\subset L_{(0,1)}^{2}(\D_{1}\cap \D_{2})$ and $(\phi-1)$ is a smooth function with support on $\overline{\D_1\cap\D_2}$, $v_{1}:=(\phi-1)u\in\mbox{Dom}(\overline{\partial})\subset L_{(0,1)}^{2}(\D_{1}\cap \D_{2})$. By definition $v_1\equiv 0$ on $K_1$, we extend $v_1$ as a zero form to the part $\Dc_1\backslash\Dc_2$ of the domain $\D_1$. Then, $v_1\in L_{(0,1)}^{2}(\D_1)$. Moreover, $\dbar v_1=(1-\phi)\dbar u-\dbar \phi\wedge u \in L_{(0,2)}^{2}(\D_1)$ since $(1-\phi)$ and $\dbar \phi$ are smooth functions with support on $\overline{\D_1\cap\D_2}$ and zero on $K_1$. 

Thus, $v_{1}:=(\phi-1)u\in \mbox{Dom}(\overline{\partial})\cap\mbox{Dom}(\overline{\partial}^{\ast})\subset L_{(0,1)}^{2}(\Omega_{1})$. Similarly, one can show that $v_{2}:=\phi u \in \mbox{Dom}(\overline{\partial})\cap\mbox{Dom}(\overline{\partial}^{\ast})\subset L_{(0,1)}^{2}(\Omega_{2})$

From the hypothesis $\forall\varepsilon>0$ $\exists C_{\varepsilon}>0$ such that for $j=1,2$
\begin{eqnarray}
\norm{v_{j}}_{\Omega_{j}}^{2}\leq \varepsilon(\norm{\overline{\partial}v_{j}}_{\Omega_{j}}^{2}+\norm{\overline{\partial}^{\ast}v_{j}}_{\Omega_{j}}^{2})+C_{\varepsilon}\norm{v_{j}}_{-1,\Omega_{j}}^{2}.
\end{eqnarray}
Thus, $\forall\varepsilon>0$ $\exists C_{\varepsilon}>0$ such that for $j=1,2$
\begin{eqnarray}\label{eq53}
\norm{v_{j}}_{\Omega_{1}\cap\Omega_{2}}^{2}\leq \varepsilon(\norm{\overline{\partial}u}_{\Omega_{1}\cap\Omega_{2}}^{2}+\norm{\overline{\partial}^{\ast}u}_{\Omega_{1}\cap\Omega_{2}}^{2}+\norm{(\nabla\phi) u}_{\Omega_{1}\cap\Omega_{2}}^{2})+C_{\varepsilon}\norm{u}_{-1,\Omega_{1}\cap\Omega_{2}}^{2}.
\end{eqnarray} 

Next we compare the $(-1)$-Sobolev norms on $\Omega_j$'s and $\Omega_1\cap\Omega_2$. In particular,
\begin{eqnarray*}
\norm{v_{2}}_{-1,\Omega_{2}}^{2}&=&\sup\ \left\{\abs{\left(v_{2},\psi \right)_{\Omega_{2}}}\ :\ 0\neq\psi\in\left( W_{(0,q)}^{1}(\Omega_{2})\right)_{0}\ \text{ and }\ \norm{\psi}_{1,\Omega_{2}}=1\right\}\\
\nonumber &=&\sup\ \left\{\abs{\left(u,\phi\psi \right)_{\Omega_{2}}}\ :\ 0\neq\psi\in\left( W_{(0,q)}^{1}(\Omega_{2})\right)_{0}\ \text{ and }\ \norm{\psi}_{1,\Omega_{2}}=1\right\}\\
\nonumber &\leq& \norm{\phi\psi}_{1,\Omega_{1}\cap \Omega_{2}}\norm{u}_{-1,\Omega_{1}\cap \Omega_{2}}\\
\nonumber &\leq& C_{\phi}\norm{\psi}_{1,\Omega_{2}}\norm{u}_{-1,\Omega_{1}\cap \Omega_{2}}.
\end{eqnarray*}
By the same way we can get 
\begin{eqnarray}
\norm{v_{1}}_{-1,\Omega_{1}}^{2}\leq C_{(1-\phi)}\norm{\psi}_{1,\Omega_{1}}\norm{u}_{-1,\Omega_{1}\cap \Omega_{2}}.
\end{eqnarray}
Now, consider the basic estimate on $\Omega_{1}\cap\Omega_{2}$, 
\begin{eqnarray}\label{best2}
\norm{u}_{\Omega_{1}\cap \Omega_{2}}^{2}&\leq& C(\norm{\overline{\partial} u}_{\Omega_{1}\cap \Omega_{2}}^{2}+\norm{\overline{\partial}^{\ast}u}_{\Omega_{1}\cap \Omega_{2}}^{2})\\ 
\nonumber \text{for all}\ u &\in&\mbox{Dom}(\overline{\partial})\cap\mbox{Dom}(\overline{\partial}^{\ast})\subset L_{(0,q)}^{2}(\Omega_{1}\cap \Omega_{2}).
\end{eqnarray}
By using (\ref{best2}), we can estimate $\norm{\nabla\phi u}_{\Omega_{1}\cap\Omega_{2}}^{2}$ in (\ref{eq53}):
\begin{eqnarray}\label{eq56}
\nonumber\norm{(\nabla\phi) u}_{\Omega_{1}\cap\Omega_{2}}^{2}&\leq& \max_{z\in\overline{\Omega_{1}\cap\Omega}_{2}}\left\{\abs{\nabla\phi(z)}\right\}\norm{u}_{\Omega_{1}\cap\Omega_{2}}^{2}\\
 &\leq& C(\norm{\overline{\partial}u}_{\Omega_{1}\cap\Omega_{2}}^{2}+\norm{\overline{\partial}^{\ast}u}_{\Omega_{1}\cap\Omega_{2}}^{2}). 
\end{eqnarray}
Thus, combining estimates at (\ref{eq53}) and (\ref{eq56}) we have 
\begin{eqnarray}
\norm{\phi u}_{\Omega_{1}\cap\Omega_{2}}^{2}&\leq \varepsilon(\norm{\overline{\partial}u}_{\Omega_{1}\cap\Omega_{2}}^{2}+\norm{\overline{\partial}^{\ast}u}_{\Omega_{1}\cap\Omega_{2}}^{2})+C_{\varepsilon}\norm{u}_{-1,\Omega_{1}\cap\Omega_{2}}^{2}
\end{eqnarray}
and
\begin{eqnarray}
\norm{(\phi-1) u}_{\Omega_{1}\cap\Omega_{2}}^{2}&\leq \varepsilon(\norm{\overline{\partial}u}_{\Omega_{1}\cap\Omega_{2}}^{2}+\norm{\overline{\partial}^{\ast}u}_{\Omega_{1}\cap\Omega_{2}}^{2})+C_{\varepsilon}\norm{u}_{-1,\Omega_{1}\cap\Omega_{2}}^{2}.
\end{eqnarray} 
Therefore, the existence of a compactness estimate on $\Omega_{1}\cap\Omega_{2}$ follows.  

\end{proof}

Another setting we consider is when $\D_1\subset\D_2$ and $\D_1$ and $\D_2$ share a piece of each other's boundary such that the boundaries of both domains separate smoothly from each other, see Figure \ref{Fig3}. 
We also assume that $b\D_1-b\D_2$ is strongly pseudoconvex and a compactness estimate for the $\overline{\partial}$-Neumann operator holds on $\D_{2}$. Then we want to know if there is a compactness estimate on $\D_1$. 

The following example demonstrates that such a setting exists. Although the domains in the example are smooth and convex, it is also possible to replace them by a biholomorphic map with non-convex domains with the same way of sharing boundaries.

\begin{figure}[ht!]
\centering
\includegraphics[width=120mm]{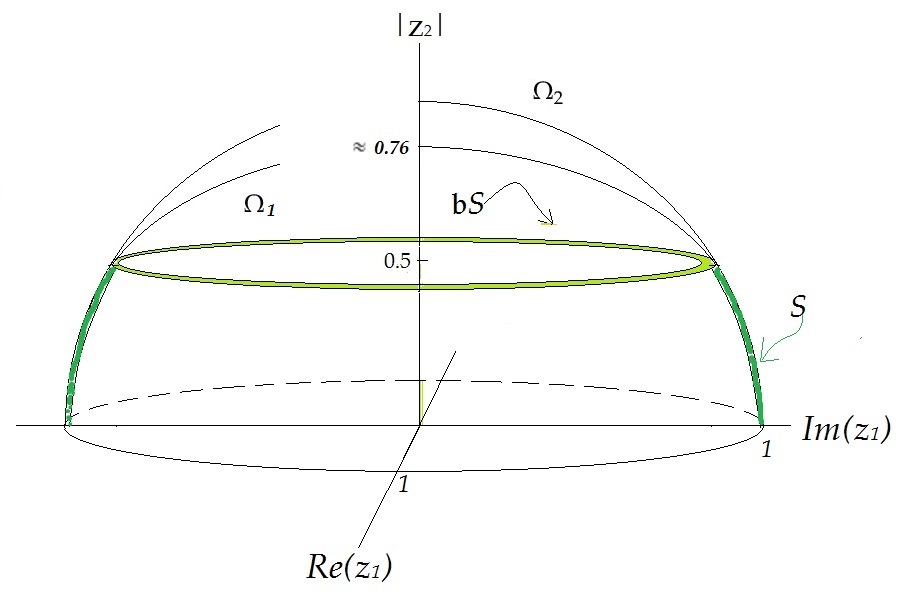}
\caption{$\D_1\subset\D_2$ and $\D_1$ and $\D_2$ share a piece of each other's boundary such that the boundaries of both domains separates smoothly from each other.}
\label{Fig3}
\end{figure}
\begin{example}
Let 
\[\lambda(t)=0\ \text{if}\ t\leq 0\ \&\ \lambda(t)=e^{-1/t}\ \text{if}\ t>0\]
$\lambda$ is a convex function on $(-\infty,1/2)$. Define

\[\D_1:=\left\{(z_1,z_2)\ :\ \rho_1(z_1,z_2)=\lambda\left(\frac{|z_1|^2+|z_2|^2}{3}\right)+\lambda\left(|z_2|^2-1/4\right)-e^{-3}<0\right\}\]
and 
\[\D_2:=\left\{(z_1,z_2)\ :\ \rho_2(z_1,z_2)=|z_1|^2+|z_2|^2-1<0\right\}.\]
Then $\D_1\subset\D_2$ and boundaries of $\D_1$ and $\D_2$ overlap as indicated in the paragraph before Proposition \ref{nontransversal intersection}. Let $S:=\lbrace\rho_1(z)=\rho_{2}(z)=0\rbrace\subset b\Omega_{2}$ be the part of the boundary with co-dimension 1. Then the boundary of $S$ is connected  (with codimension $2$), see the Figure \ref{Fig3}. In particular,
\[S=\left\{(z_1,z_2)\ :\ |z_1|^2+|z_2|^2=1\ \text{ and }\ |z_2|\leq 1/2 \right\}\ \text{ and }\]
\[bS=\left\{(z_1,z_2)\ :\ |z_1|=\frac{\sqrt{3}}{2}\ \text{ and }\ |z_2|= 1/2 \right\}.\]
\end{example}

Indeed, we get a compactness estimate on $\D_1$ by creating the setup in Proposition \ref{nontransversal intersection}. 
Consider the domain $\D_1$ as a result of a non-transversal intersection of $\D_2$ and another domain $\widetilde{\D_1}$ which shares part of the boundaries of $\D_1$ in a way that gives the setup in Proposition \ref{nontransversal intersection}.  

\begin{remark}
Going back to the first example, we can ask the question in the reverse direction.
Namely, let the $\dbar$-Neumann operator be compact on the non-transversal intersection of two domains also assume that non-intersecting parts of the boundary of both domains are strictly pseudoconvex. Can we conclude that the $\dbar$-Neumann operator is compact on each of the domains? Surprisingly the answer is unknown.
\end{remark}


\section{Exact regularity on the intersection of two domains}

An analog of the main problem associated with the Bergman projection operator can be formulated as follows. Again, let $\D_1$ and $\D_2$ be smooth bounded pseudoconvex domains, and assume that each has exactly regular Bergman projection operator. That is, the Bergman projection operators $\B_{\D_1}$ and $\B_{\D_2}$ map Sobolev space $W^k$ to Sobolev space $W^k$ for all $k\geq 0$. We are interested in if the Bergman projection on the intersection domain $\D_1\cap\D_2$ is also exactly regular. 

It is known that on a general Lipschitz domain the $\dbar$-Neumann operator (or even the Green operator for the Dirichlet problem) is not regular near the singular part of the domain, see \cite{BarretVassiliadou2003} and \cite{Shaw2005}. On the other hand, the Bergman projection on a product (which is Lipschitz) of smoothly bounded pseudoconvex domains (each having Condition-R) is exactly regular, see \cite[Corollary 1.3]{ChakrabartiShaw2011}. 

Locally, the Bergman projection is always regular \cite[Theorem $2^{\prime}$]{Barrett86}. However, on the transversal intersection of two balls, the Bergman projection is not exactly regular. In particular, near the non-generic points of the boundary, \cite{BarretVassiliadou2003} local Sobolev estimates fail.

In this section we present a similar example on the intersection of two polydiscs. It is known that the Bergman projection is regular on a polydisc. However, the Bergman projection on the intersection fails to be exactly regular. Our argument is elementary and is based on a straightforward biholomorphic equivalence.

Let $D_r(w_0)\subset \C$ denote the disc of radius $r>0$ centered at $w_0$. Define the following domains,
\begin{align*}
P_1^2&=\{(z_1,z_2)\in\C^2\ |\ z_1\in D_1(0),\ |z_2|<1\}\\
P_2^2&=\{(z_1,z_2)\in\C^2\ |\ z_1\in D_1(1),\ |z_2|<1\}\\
\mathcal{P}&:=P_1^2\cap P_2^2=\{(z_1,z_2)\in\C^2\ |\ z_1\in D_1(0)\cap D_1(1),\ |z_2|<1\}.
\end{align*}

\begin{theorem}\label{Sobolev Regularity}
The Bergman projection $\B_{\CP}$ of the intersection of two polydiscs is not exactly regular. In particular, there exists a smooth function on the closure of the intersection domain such that its Bergman projection is not a smooth function (or in $W^k$ for large enough $k$) on the closure of the domain.
\end{theorem}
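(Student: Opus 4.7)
The plan is to use the product structure $\CP = L \times \disc$ to reduce the theorem to the planar lens factor $L := D_1(0) \cap D_1(1)$, and then construct an explicit smooth witness on $L$ whose Bergman projection carries a fractional-power corner singularity. For the reduction, note that $L$ is bounded by two unit-circle arcs meeting at the corner points $p_\pm := \tfrac{1 \pm i\sqrt{3}}{2}$, and a tangent-vector computation shows the interior angle of $L$ at each $p_\pm$ is $2\pi/3$. Because Bergman kernels on Cartesian products factor, $K_{\CP}((z_1,z_2),(w_1,w_2)) = K_L(z_1,w_1)\,K_{\disc}(z_2,w_2)$, so for any $g \in C^\infty(\overline L)$ the function $F(z_1,z_2) := g(z_1)$ belongs to $C^\infty(\overline\CP)$ and satisfies $\B_{\CP}(F)(z_1,z_2) = \B_L(g)(z_1)$. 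Thus the task reduces to producing a single $g \in C^\infty(\overline L)$ with $\B_L(g) \notin C^\infty(\overline L)$ (or merely $\notin W^k(L)$ for $k \ge 2$).

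Let $\phi:L \to \disc$ be a Riemann map. Classical boundary asymptotics for conformal maps at corners give $\phi(z) - \phi(p_\pm) \sim c_\pm (z - p_\pm)^{3/2}$ and $\phi'(z) \sim \tilde c_\pm (z - p_\pm)^{1/2}$ near each corner, the exponent $3/2 = \pi/(2\pi/3)$ being dictated by the interior angle. So $\phi'$ is an element of the Bergman space of $L$ that exhibits a fractional-power non-smoothness at each corner; this will be the only source of non-smoothness in the construction below. I will pick a nonnegative, nontrivial, radial cutoff $\widetilde\chi \in C_c^\infty(\{|w|<1/2\})\subset C_c^\infty(\disc)$ and set $\chi := \widetilde\chi \circ \phi$; because $\widetilde\chi$ is supported strictly inside $\disc$, the pullback $\chi$ is smooth and compactly supported in $L$, disjoint from $\{p_\pm\}$. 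The candidate witness is $g(z) := \chi(z)\,\phi'(z)$, which lies in $C^\infty(\overline L)$ because $\chi$ vanishes identically in a neighborhood of the only non-smooth locus $\{p_\pm\}$ of $\phi'$.

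The key computation uses the Bell transformation identity $K_L(z,w) = \phi'(z)\,\overline{\phi'(w)}\,K_{\disc}(\phi(z),\phi(w))$ together with the change of variables $u = \phi(w)$, under which $|\phi'(w)|^2\,dA(w) = dA(u)$, giving
\begin{align*}
\B_L(g)(z) = \phi'(z)\cdot \B_{\disc}(\widetilde\chi)(\phi(z)).
\end{align*}
Since $\widetilde\chi$ is radial, angular integration against the orthonormal Bergman basis $\{\sqrt{(k+1)/\pi}\,u^k\}_{k\ge 0}$ of the disc kills every coefficient with $k\ge 1$, so $\B_{\disc}(\widetilde\chi)$ reduces to the nonzero constant $c := \pi^{-1}\int_{\disc}\widetilde\chi\,dA$. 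Therefore $\B_L(g) = c\,\phi'$, and near each corner $\B_L(g)(z) \sim c\,\tilde c_\pm (z-p_\pm)^{1/2}$, a fractional power that forces $\B_L(g)\notin C^\infty(\overline L)$ and $\B_L(g)\notin W^k(L)$ for $k\ge 2$. Setting $F(z_1,z_2) := g(z_1)$ and invoking the reduction of the first paragraph then yields the theorem.

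The only step needing care is the smoothness of $g = \chi\phi'$ up to the corners of $L$, and this is immediate from the support condition on $\chi$: the fractional-power singularities of $\phi'$ occur precisely at $p_\pm$, where $\chi$ vanishes identically. I do not foresee a genuine obstacle beyond invoking the classical corner asymptotics of the Riemann map and the trivial Bergman projection of a radial function on the disc.
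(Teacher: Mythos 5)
Your proposal is correct and follows essentially the same route as the paper: reduce to the lens factor via the product formula for Bergman kernels, push forward a radial cutoff times $\phi'$ under Bell's transformation law to get $\B_L(g)=c\,\phi'$, and conclude from the corner singularity of $\phi'$. The only cosmetic difference is that you invoke the classical corner asymptotics $\phi'\sim(z-p_\pm)^{1/2}$ for the $2\pi/3$ angle, whereas the paper writes the Riemann map explicitly (its formula indeed carries the exponent $3/2$) and reads off the singularity from that.
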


\begin{proof}
We start the construction of the desired function in one dimension first. Since $\mathcal{P}$ is a product domain, we then lift the example up by a tensor argument. 

By the Riemann Mapping Theorem, there exits a conformal map $F(z)$ that maps $\dD=D_1(0)\cap D_1(1)$ onto the unit disc $\disc$. The conformal map $F:\dD\rightarrow\disc$ is explicitly given by 
\begin{align}\label{Conformal LanceToDisc}
 F(z)&=\frac{\left(\frac{2z-1+i\sqrt{3}}{2z-1-i\sqrt{3}}\right)^{\frac{3}{2}}+i}
{\left(\frac{2z-1+i\sqrt{3}}{2z-1-i\sqrt{3}}\right)^{\frac{3}{2}}-i}.
\end{align}

By \cite{Bell81} (see also \cite[Theorem 14.3.8]{KrantzBook}), the following transformation formula holds between the Bergman kernels of $\dD$ and $\disc$
\[K_{\dD}(z,w)= F^{\prime}(z)K_{\disc}( F(z), F(w))\overline{ F^{\prime}(w)}.\]

Since $\mathcal{P}$ is the product of domains $\dD$ and $\disc$ (in $z_1$ and $z_2$ complex planes respectively), we can calculate the Bergman kernel of $\mathcal{P}$ as a product. Namely,
\begin{align}\label{Bergam Kernel on Polydisc}
K_{\mathcal{P}}(z_1,z_2,w_1,w_2)&=K_{\dD}(z_1,w_1)\cdot K_{\disc}(z_2,w_2)\\
\nonumber&=\frac{1}{\pi^2}\frac{ F^{\prime}(z_1)\cdot\overline{ F^{\prime}(w_1)}}{\left(1- F(z_1)\overline{ F(w_1)}\right)^2}\cdot \frac{1}{(1-z_2\overline{w_2})^2}.
\end{align}

Next we take a smooth radial function $H(\zeta)$ on $\disc$ with compact support and $0\leq H(\zeta)\leq 1$. We define  
\begin{align}\label{test function}
\chi(z):=H\left(F(z)\right)\cdot F^{\prime}(z)\ \ \text{ on }\ \dD.
\end{align}
Note that $\chi(z)$ is a smooth function with compact support and we can calculate its projection onto the Bergman space.

\begin{lemma}\label{l3} 
Let $\chi(z)$ be defined as above. Then
\[\B_{\dD}\left(\chi\right)(z)=c\cdot F^{\prime}(z)\]
for some constant $c$.
\end{lemma} 
\begin{proof}
The Bergman projection of $\chi(z)$ on $\dD$ is given by
\begin{align*}
\B_{\dD}\left(\chi\right)(z)&=\int_{\dD} B_{\dD}(z,w)\cdot H\left(F(w)\right)\cdot F^{\prime}(w) dA(w).
\end{align*}

We switch the integration to $\disc$ by substituting $w=F^{-1}(\xi)$. Hence, $dA(w)=|(F^{-1}(\xi))^{\prime}|^2 dA(\xi)$ and

\begin{align*}
\B_{\dD}\left(\chi\right)(z)&=\int_{\disc} B_{\dD}(z,F^{-1}(\xi))\cdot H(\xi)\cdot F^{\prime}(F^{-1}(\xi))\cdot \overline{(F^{-1}(\xi))^{\prime}} \cdot (F^{-1}(\xi))^{\prime} dA(\xi)
\end{align*}

Also let  $z=F^{-1}(\zeta)$ such that $F(z)=\zeta$. Then

\begin{align*}
\B_{\dD}&\left(\chi\right)(F^{-1}(\zeta))=\\
\nonumber&=\frac{(F^{-1}(\zeta))^{\prime}}{(F^{-1}(\zeta))^{\prime}}\int_{\disc} B_{\dD}(F^{-1}(\zeta),F^{-1}(\xi))\cdot H(\xi)\cdot F^{\prime}(F^{-1}(\xi))\cdot \overline{(F^{-1}(\xi))^{\prime}} \cdot (F^{-1}(\xi))^{\prime} dA(\xi)
\end{align*}

By considering 
\[\frac{1}{F^{\prime}(F^{-1}(\xi))}=(F^{-1}(\xi))^{\prime}\]
the last expression is equal to

\begin{align*}
\B_{\dD}&\left(\chi\right)(F^{-1}(\zeta))
=\frac{(F^{-1}(\zeta))^{\prime}}{(F^{-1}(\zeta))^{\prime}}\int_{\disc} B_{\dD}(F^{-1}(\zeta),F^{-1}(\xi))\cdot H(\xi) \cdot \overline{(F^{-1}(\xi))^{\prime}} dA(\xi)\\
\nonumber&=\frac{1}{(F^{-1}(\zeta))^{\prime}}\cdot (F^{-1}(\zeta))^{\prime}\int_{\disc} B_{\dD}(F^{-1}(\zeta),F^{-1}(\xi))\cdot \overline{(F^{-1}(\xi))^{\prime}}\cdot H(\xi) dA(\xi)
\end{align*}

By using the transformation formula, we get
\begin{align*}
\B_{\dD}&\left(\chi\right)(F^{-1}(\zeta))=F^{\prime}(z)\int_{\disc} B_{\disc}(\zeta,\xi)\cdot H(\xi) dA(\xi).
\end{align*}
Since $H(\xi)$ is a radial function and compactly supported on $\disc$, its Bergman projection on $\disc$ is a nonzero constant. Thus, we get
\begin{align}
\B_{\dD}\left(\chi\right)(z)&=c\cdot F^{\prime}(z)
\end{align}

\end{proof}

Recall $\chi(z)=H(F(z))\cdot F^{\prime}(z)$ is smooth on the closure of $\mathcal{D}$. However, $F^{\prime}(z)$ has singularities at the corners of the domain, 
\begin{align*}
a=\frac{1}{2}-i\frac{\sqrt{3}}{2}\ \ \ \&\ \ \ 
b=\frac{1}{2}+i\frac{\sqrt{3}}{2}. 
\end{align*}
In particular, $\chi(z)\in W^{k}(\dD)$, for all $k\in \n$ but $F^{\prime}(z)\not\in W^{k}(\dD)$ for sufficiently large $k\in\n$ and $F^{\prime}(z)$ is not smooth on $\overline{\dD}$. 

It follows that the Bergman projection $\B_{\mathcal{P}}$ does not take $W^k(\mathcal{P})$ to $W^k(\mathcal{P})$ for some $k>0$, so $\B_{\mathcal{P}}$ is not exactly regular. Indeed, the intersection domain $\mathcal{P}$ is a product of $\mathcal{D}$ and $\disc$, we can redefine $\chi(z_1)$ as a function on $\mathcal{P}$ by considering it constant on $z_2$ direction. The Bergman projection on $\mathcal{P}$ of the redefined $\widetilde{\chi}(z_1,z_2)$ is the same as the Bergman projection of $\chi(z_1)$ on $\dD$, that is, $\B_{\mathcal{P}}\left(\widetilde{\chi}(z_1,z_2)\right)=\B_{\mathcal{D}}\left(\chi(z_1)\right)$. Therefore, again the singularities of $F'(z)$ break the regularity.

\end{proof}

\begin{remark}
It is known that the Bergman project is $L^p$- regular on a polydisc, see \cite{RudinDisc}. The same example of functions above also indicate that the $L^p$ regularity of the Bergman projection is not preserved under intersection. Indeed, due to compact support, $\widetilde{\chi}(z_1,z_2)$ is in $L^p(\mathcal{P})$ for all $p>1$. However, due to the singularities of $F'(z)$, the projection $B_{\mathcal{P}}\left(\widetilde{\chi}(z_1,z_2)\right)$ is not in $L^p(\mathcal{P})$ for sufficiently large $p$.

\end{remark}


\section{Hilbert-Schmidt property of Hankel operators on the intersection of two domains}

A linear bounded operator $T$ on a Hilbert space $H$ is called a \textit{Hilbert-Schmidt operator} if there is an orthonormal basis $\{\xi_{j}\}$ for $H$ such that the sum $\sum\limits_{j=1}^{\infty}\norm{T(\xi_{j})}^2$ is finite. 
It is known that any Hilbert-Schmidt operator is compact and they are dense in the space of compact operators, see \cite[Section X.]{Retherford93}. 

Another analog of the main intersection problem associated to Hilbert-Schmidt property of Hankel operators can be formulated as follows. Let a Hankel operator $H_{\phi}$ be Hilbert-Schmidt on the Bergman spaces of two domains $\D$ and $\D^{\prime}$,  is it also Hilbert-Schmidt on the Bergman space of the intersection domain $\D\cap\D^{\prime}$? Below we answer this question in negative by showing an explicit example. The domains and computations below follow the ideas previously presented in \cite{Wiegerinck84, CelikZeytuncuACTA,CelikZeytuncuRMJM}.

Set
\begin{align*}
X&=\left\{(z_1,z_2)\in\C^2\ : |z_1|>4,\ \ |z_2|<\frac{1}{2|z_1|\log_4|z_1|}\right\}\\
Y&=\left\{(z_1,z_2)\in\C^2\ : |z_2|>4,\ \ \left||z_1|-\frac{1}{|z_2|}\right|<\frac{1}{|z_2|^3}\right\}\\
Z&=\left\{(z_1,z_2)\in\C^2\ : |z_1|\leq 4,\ \ |z_2|\leq 4\right\}
\end{align*}
and define 
\[\D=X\cup Y\cup Z.\]

Also set
\begin{align*}
X^{\prime}&=\left\{(z_1,z_2)\in\C^2\ : |z_1|>4,\ \ \left||z_2|-\frac{1}{|z_1|}\right|<\frac{1}{|z_1|^3}\right\}\\
Y^{\prime}&=\left\{(z_1,z_2)\in\C^2\ : |z_2|>4,\ \ |z_1|<\frac{1}{2|z_2|\log_4|z_2|}\right\}\\
Z&=\left\{(z_1,z_2)\in\C^2\ : |z_1|\leq 4,\ \ |z_2|\leq 4\right\}
\end{align*}
and define
\[\D^{\prime}=X^{\prime}\cup Y^{\prime}\cup Z.\]

Note that both $\D$ and $\D^{\prime}$ are unbounded Reinhardt domains with finite volumes, see Figure \ref{Fig4}. 

\begin{figure}[ht!]
\centering
\includegraphics[width=150mm]{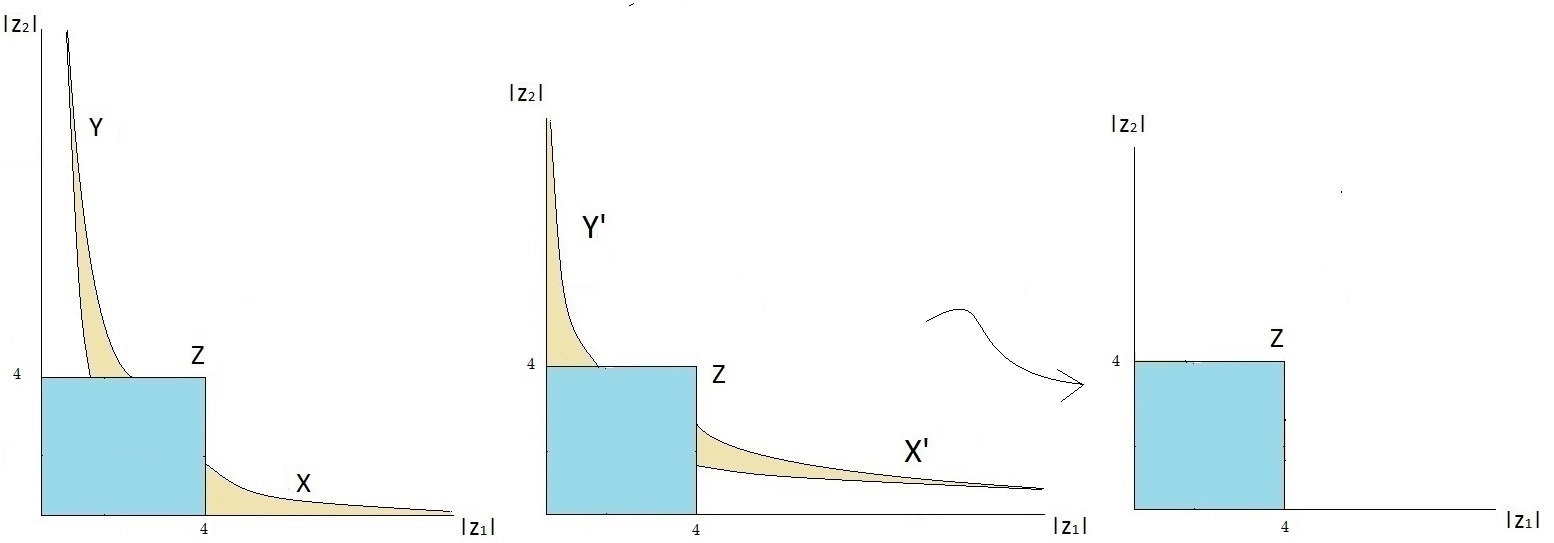}
\caption{$\D$ and $\D^{\prime}$ are unbounded Reinhardt domains with finite volumes and $\D\cap\D^{\prime}$ is a polydisc.}
\label{Fig4}
\end{figure}


It is evident that $X\cap X^{\prime}=\emptyset$. Indeed, the maximum radius of $X$ in $z_2$ direction is $\frac{1}{2|z_1|\log_4|z_1|}$ and the minimum radius of $X^{\prime}$ on $z_2$ direction is $\frac{1}{|z_1|}-\frac{1}{|z_1|^3}$. If $|z_1|>4$, since $$\frac{1}{2|z_1|\log_4|z_1|}<\frac{1}{|z_1|}-\frac{1}{|z_1|^3}$$ we conclude that $X\cap X^{\prime}=\emptyset$.  A similar argument shows $Y\cap Y^{\prime}=\emptyset$.
Thus, $\D\cap\D^{\prime}=Z$. 

Due to the construction of two domains, the Bergman spaces $A^2(\D)$ and $A^2(\D^{\prime})$ have special properties. In particular, both spaces are spanned by the monomials of the form $(z_1 z_2)^j$. A straightforward computation indicates (see also below), $$\int_{\D}|z_1^jz_2^k|^2dV(z) \text{ and }\int_{\D^{\prime}}|z_1^jz_2^k|^2dV(z)$$
are finite if and only if $j=k$. Using the radial symmetry of domains, any holomorphic function can be expanded into Taylor series and we obtain the orthogonal basis $\left\{(z_1 z_2)^j\right\}_{j=1}^{\infty}$ for $A^2(\D)$ and $A^2(\D^{\prime})$. 

On two Bergman spaces we consider the Hankel operator $H_{\zb_1\zb_2}$. Although the symbol $\zb_1\zb_2$ is not bounded on $\D$ or $\D'$, the operator is bounded on both Bergman spaces. This follows from comparing the norms of the monomials in the Bergman space, it becomes clear in the proof of Theorem \ref{HS}. We further prove the following.

\begin{theorem}\label{HS}
$H_{\zb_1\zb_2}$ is Hilbert-Schmidt on $A^2(\D)$ and $A^2(\D^{\prime})$. However, it is not on $A^2(\D\cap\D^{\prime})$.
\end{theorem}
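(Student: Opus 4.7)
The plan is to compute the Hilbert-Schmidt norm of $H_{\bar z_1 \bar z_2}$ directly in each case, exploiting the complete Reinhardt symmetry of $\D$, $\D'$, and $\D\cap\D'=Z$. In such a setting the monomials $z_1^j z_2^k$ are pairwise orthogonal in $A^2$, and the angular integration immediately forces the Bergman projection of $\bar z_1 \bar z_2\cdot z_1^j z_2^k$ onto $A^2$ to be a single scalar multiple of $z_1^{j-1}z_2^{k-1}$ (whenever that monomial itself lies in $A^2$). So the whole analysis reduces to identifying the $A^2$-monomials and computing one-variable radial moment integrals.

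First, I would verify by a polar-coordinates calculation on each of $X$, $Y$, $Z$ that the only monomials in $A^2(\D)$ are $\{(z_1 z_2)^j\}_{j\ge 0}$: $\int_X |z_1^j z_2^k|^2\,dV$ converges iff $j\le k$, owing to the cusp $|z_2|<(2|z_1|\log_4|z_1|)^{-1}$, and $\int_Y |z_1^j z_2^k|^2\,dV$ converges iff $k\le j$, by an analogous estimate on the cusp around $|z_1|\approx 1/|z_2|$. By the $z_1\leftrightarrow z_2$ symmetry of the construction the same basis works for $A^2(\D')$. Writing $a_j=\|(z_1 z_2)^j\|^2_{L^2(\D)}$ and $e_j = (z_1z_2)^j/\sqrt{a_j}$, orthogonality in the angular variables collapses the Hilbert-Schmidt expansion to the telescoping formula
\[
\|H_{\bar z_1 \bar z_2} e_j\|^2 = \frac{a_{j+1}}{a_j} - \frac{a_j}{a_{j-1}} \quad (j\ge 1), \qquad \|H_{\bar z_1 \bar z_2} e_0\|^2 = \frac{a_1}{a_0},
\]
so that $\sum_{j=0}^{N}\|H_{\bar z_1 \bar z_2}e_j\|^2 = a_{N+1}/a_N$. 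Computing $a_j$ asymptotically, the contribution from $Z$ is exactly $\pi^2\cdot 16^{2j+2}/(j+1)^2$, and a substitution $u=\log_4 r_1$ for the $X$-contribution and $r_1=1/r_2+t/r_2^3$ for the $Y$-contribution show that both are sub-geometric relative to the $Z$-contribution. Hence $a_{j+1}/a_j\to 256$, and $H_{\bar z_1 \bar z_2}$ is Hilbert-Schmidt (in particular bounded) on $A^2(\D)$; the proof for $\D'$ is identical.

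On the intersection $\D\cap\D'=Z=D_4(0)\times D_4(0)$ the basis is the full bidisc family $\{z_1^j z_2^k\}_{j,k\ge 0}$, with explicit norms $a_{j,k}=\pi^2\cdot 16^{j+k+2}/((j+1)(k+1))$. The analogue of the identity above reads $\|H_{\bar z_1 \bar z_2} e_{j,k}\|^2 = a_{j+1,k+1}/a_{j,k} - a_{j,k}/a_{j-1,k-1}$ in the interior of the lattice, but along the edge $j=0$ (respectively $k=0$) the projection term vanishes, leaving $\|H_{\bar z_1 \bar z_2} e_{0,k}\|^2 = a_{1,k+1}/a_{0,k} = 128(k+1)/(k+2)$. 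These edge terms do not tend to zero, so $\sum_{k\ge 0}\|H_{\bar z_1 \bar z_2} e_{0,k}\|^2 = \infty$ and the Hilbert-Schmidt norm diverges along a single edge, proving $H_{\bar z_1 \bar z_2}$ is not Hilbert-Schmidt on $A^2(\D\cap\D')$. The main technical step is the $Y$-moment estimate: the substitution produces a factor $(1+tr_2)^{2j+1}$ which grows with $j$, but on $Y$ one has $|tr_2|\le 1/r_2^2\le 1/16$, so it is bounded by $(17/16)^{2j+1}$, comfortably dominated by the $256^{j+1}$ growth of the $Z$-contribution; everything else reduces to elementary angular orthogonality.
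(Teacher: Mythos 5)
Your proof is correct and follows the same skeleton as the paper's: identify $\{(z_1z_2)^j\}$ as an orthogonal basis of $A^2(\D)$ and $A^2(\D^{\prime})$, reduce $\lVert H_{\zb_1\zb_2}e_j\rVert^2$ to the moment ratio $a_{j+1}/a_j-a_j/a_{j-1}$ (your $a_j$ is the paper's $c_{(j,j)}^2$), and estimate the radial moments over $X$, $Y$, $Z$. You diverge in two places, both to good effect. First, you observe that the Hilbert--Schmidt sum telescopes, so that $\sum_{j=0}^{N}\lVert H_{\zb_1\zb_2}e_j\rVert^2=a_{N+1}/a_N$, and convergence reduces to boundedness of this ratio, which tends to $256$ because the polydisc piece $Z$ dominates. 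The paper instead expands $c_{(k+1,k+1)}^2c_{(k-1,k-1)}^2-c_{(k,k)}^4$ and shows the general term is $\approx k^{-2}$; your route skips that computation and is also more robust, since the $Y$-contribution actually grows like $(17/16)^{2k}$ (the paper's assertion that $\beta_k$ is bounded above is not quite accurate), while your argument only needs it to be $o\left(256^{k}/k^{2}\right)$, which your substitution bound delivers. Second, for the failure on $\D\cap\D^{\prime}=Z$, the paper invokes the general theorem that no Hankel operator with anti-holomorphic symbol is Hilbert--Schmidt on a bounded Reinhardt domain, whereas you compute the edge terms $\lVert H_{\zb_1\zb_2}e_{0,k}\rVert^2=a_{1,k+1}/a_{0,k}=128(k+1)/(k+2)$ directly and note they do not tend to zero; this makes the example self-contained at the cost of generality. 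One typographical slip: with $r_1=1/r_2+t/r_2^3$ the relevant factor is $\left(1+t/r_2^{2}\right)^{2j+1}$, not $(1+tr_2)^{2j+1}$; the bound $|t/r_2^{2}|\le 1/16$ and the resulting estimate $(17/16)^{2j+1}$, which is what you actually use, are correct.
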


\begin{proof}
The set $\left\{\frac{z_1^k z_2^k}{c_{(k,k)}}\right\}_{k\in \n}$  is an orthonormal basis for $A^{2}(\D)$ and $A^2(\D^{\prime})$ where $$c_{(k,k)}^2=\int_{\D}|z_1z_2|^{2k}dV(z_1,z_2)=\int_{\D^{\prime}}|z_1z_2|^{2k}dV(z_1,z_2).$$ In order to prove that $H_{\zb_1\zb_2}$ is a Hilbert-Schmidt Hankel operator on both spaces, we look at the sum 
\[
\sum\limits_{k=1}^{\infty}\left\|H_{\zb_1\zb_2}\left(\frac{z_1^k z_2^k}{c_{(k,k)}}\right)\right\|^2
\]
where the norms are identical on $\D$ and $\D^{\prime}$. Indeed,

\begin{align*}
\left\|H_{\zb_1\zb_2}\left(\frac{z_1^k z_2^k}{c_{(k,k)}}\right)\right\|^2&=\frac{1}{c_{(k,k)}^2}\left\langle H_{\zb_1\zb_2}\left(z_1^k z_2^k\right),H_{\zb_1\zb_2}\left(z_1^k z_2^k\right)\right\rangle\\
\nonumber&=\frac{1}{c_{(k,k)}^2}\left\langle \zb_1\zb_2 z_1^k z_2^k-P(\zb_1\zb_2 z_1^k z_2^k), \zb_1\zb_2 z_1^k z_2^k-P(\zb_1\zb_2 z_1^k z_2^k)\right\rangle\\
\nonumber&=\frac{1}{c_{(k,k)}^2}\left(c_{(k+1,k+1)}^2-\frac{c_{(k,k)}^2 c_{(k,k)}^2}{c_{(k-1,k-1)}^2}-\frac{c_{(k,k)}^2 c_{(k,k)}^2}{c_{(k-1,k-1)}^2}+\frac{c_{(k,k)}^2 c_{(k,k)}^2c_{(k-1,k-1)}^2}{c_{(k-1,k-1)}^2 c_{(k-1,k-1)}^2}\right)\\
\nonumber&=\frac{c_{(k+1,k+1)}^2}{c_{(k,k)}^2}-\frac{c_{(k,k)}^2}{c_{(k-1,k-1)}^2}.
\end{align*}

Therefore we need to estimate the sum
\begin{align}\label{norm of Hankel - last step}
\sum_{k=1}^{\infty}\left(\frac{c_{(k+1,k+1)}^2}{c_{(k,k)}^2}-\frac{c_{(k,k)}^2}{c_{(k-1,k-1)}^2}\right).
\end{align}

We look at terms $c_{(k,k)}^2$, by computing three integrals on separate pieces of the domains. First,
\begin{align*}
\int\limits_{X}|z_1z_2|^{2k}dV(z_1,z_2)
&=\frac{4\pi^{2}}{2k+2}\int\limits_{4}^{\infty} \frac{1}{2r_1(\log_4(r_1))^{2(2k+2)}} dr_1
=\frac{4\pi^{2}}{2k+2}\left(-\frac{\log_4^{-2k-1}(r_1)}{2k+1}\right)\Bigg\vert_{4}^{\infty}\\
&= \frac{4\pi^2}{2(2k+2)(2k+1)}.
\end{align*}
Next,
\begin{align*}
\int\limits_{Y}|z_1z_2|^{2k}dV(z_1,z_2) &=4\pi^2\int\limits_{4}^{\infty} r_2^{2k+1}\int\limits_{\frac{1}{r_2}-\frac{1}{r_2^3}}^{\frac{1}{r_2}+\frac{1}{r_2^3}} r_1^{2k+1}dr_1dr_2\\
&=4\pi^2\frac{1}{2k+2}\int\limits_{4}^{\infty} r_2^{2k+1}\left[\left(\frac{r_2^2+1}{r_2^3}\right)^{2k+2}-\left(\frac{r_2^2-1}{r_2^3}\right)^{2k+2}\right] dr_2\\
&=4\pi^2\frac{1}{2k+2}\int\limits_{4}^{\infty} r_2^{2k+1}\left[\frac{2(2k+2)r_2^{2(2k+1)}+\text{\emph{(lower order terms)}}}{r_2^{6k+6}}\right] dr_2\\
&=8\pi^2\int\limits_{4}^{\infty} \frac{1}{r_2^{3}} dr_2 
+ 4\pi^2\frac{1}{2k+2}\int\limits_{4}^{\infty} r_2^{2k+1}\frac{\text{\emph{(lower order terms)}}}{r_2^{6k+6}} dr_2\\
&=\frac{\pi^2}{4}+\text{\emph{(lower order terms)}}=:\beta_k\\
\end{align*}
where $\beta_k$ is bounded from below and above. Finally,
\begin{align*}
\int\limits_{Z}|z_1z_2|^{2k}dV(z_1,z_2)=(2\pi)^2 \left(\int_0^4 r_1^{2k+1}dr_1 \right)^2= \frac{4\pi^2 \cdot 2^{8k+6}}{(k+1)^2}.
\end{align*}
When we add three pieces together,
\begin{align}
\nonumber c_{(k,k)}^2&=\int\limits_{X}|z_1z_2|^{2k}dV(z_1,z_2)+\int\limits_{Y}|z_1z_2|^{2k}dV(z_1,z_2)+\int\limits_{Z}|z_1z_2|^{2k}dV(z_1,z_2)\\
\nonumber &=4\pi^2\left[\frac{1}{2(2k+2)(2k+1)}+\beta_k+\frac{2^{8k+6}}{(k+1)^2}\right]\\
\nonumber&=4\pi^2\left[\frac{\gamma_k}{(k+1)^2}+\beta_k+\frac{2^{8k+6}}{(k+1)^2}\right]\ \ \text{where} \ \gamma_k\approx 1\ \&\ \beta_k\approx 1.
\end{align}

We plug everything back in
\begin{align*}
\frac{c_{(k+1,k+1)}^2}{c_{(k,k)}^2}-\frac{c_{(k,k)}^2}{c_{(k-1,k-1)}^2}=\frac{c_{(k+1,k+1)}^2 c_{(k-1,k-1)}^2-c_{(k,k)}^2 c_{(k,k)}^2}{c_{(k,k)}^2 c_{(k-1,k-1)}^2}.
\end{align*}

A straightforward computation gives that  

\begin{align*}c_{(k+1,k+1)}^2 c_{(k-1,k-1)}^2-c_{(k,k)}^2 c_{(k,k)}^2&=\frac{2^{16k+12}\left((k+1)^4-(k+2)^2k^2\right)}{(k+2)^2 k^2 (k+1)^4}+\frac{\text{\emph{(lower order terms)}}}{(k+2)^2 k^2 (k+1)^4}\\
&=\frac{2^{16k+12}\left(2k^2+4k+1\right)}{(k+2)^2 k^2 (k+1)^4}+\frac{\text{\emph{(lower order terms)}}}{(k+2)^2 k^2 (k+1)^4}
\end{align*} 

and 
$$c_{(k,k)}^2 c_{(k-1,k-1)}^2=\frac{2^{16k+4}}{(k+1)^2 k^2}+\frac{\text{\emph{(lower order terms)}}}{(k+1)^2 k^2}.$$
Therefore, 
\begin{align*}
\frac{c_{(k+1,k+1)}^2}{c_{(k,k)}^2}-\frac{c_{(k,k)}^2}{c_{(k-1,k-1)}^2}=\frac{1}{(k+2)^2(k+1)^2}\cdot \frac{2^{16k+12}\left[2k^2+4k+1\right]+\cdots}{2^{16k+4}+\cdots}\approx \frac{1}{k^2}.
\end{align*}

Hence, the sum in the equation \eqref{norm of Hankel - last step} is finite and so the Hankel operator $H_{\zb_1\zb_2}$ is Hilbert-Schmidt on $A^2(\D)$ and $A^2(\D^{\prime})$.  For more on \eqref{norm of Hankel - last step} and detailed computations see \cite{CelikZeytuncu2013}.

On the other hand, $H_{\zb_1\zb_2}$ is not a Hilbert-Schmidt on the Bergman space of the intersection domain $\D\cap\D^{\prime}=Z$. In fact, none of the Hankel operators with anti-holomorphic symbols is Hilbert-Schmidt on the Bergman space of a bounded Reinhardt domain, see \cite{CelikZeytuncuACTA,Trieu2014}. 
\end{proof}



\section{Further Directions}

The compactness of the $\dbar$-Neumann operator is one of main point of further investigation in this context. The similar question for Hankel operators is also of interest. The compactness of Hankel operators is also a local property, see \cite{Sahut}. Therefore, one can investigate if the compactness of a Hankel operator is preserved on an intersection domain.

We used unbounded domains in the fourth section to have Hilbert-Schmidt Hankel operators. It is not known if other Hilbert-Schmidt Hankel operators can be constructed on bounded domains. The answer is negative if the symbol is anti-holomorphic and domains are Reinhardt, see \cite{CelikZeytuncu2013,CelikZeytuncuACTA,Trieu2014}. However, the general case is unknown.

\section*{Acknowledgements}
The authors would like to thank the anonymous referee and the editor for constructive comments. The second author would like to thank the organizers of the Analysis and Geometry in Several Complex Variables Conference at Texas A\&M University at Qatar, Shiferaw Berhanu, Nordine Mir and Emil J. Straube for the kind invitation. The first author would like to thank organizers of the workshop on several complex variables and CR geometry at Erwin Schr$\ddot{\text{o}}$dinger International Institute for Mathematical Physics in Vienna, Austria in $2015$ for the thought provoking talks and productive environment.


\end{document}